\newtheorem{theorem}{Theorem}
\newtheorem{corollary}{Corollary}
\newtheorem{lemma}{Lemma}
\newtheorem{remark}{Remark}
\newtheorem{question}{Question}
\numberwithin{equation}{section}
\begin{document}
\markboth{Authors' Names}
{Classification of Groups according to the number of end vertices}
\title[]{Classification of Groups according to the number of end vertices in the coprime graph}
\maketitle
\vspace{10pt}
 \begin{center} \footnotesize TARIQ A. ALRAQAD \\
Department of Mathematics, University of Hail\\ Hail, Kingdom of Saudi Arabia \\
t.alraqad@uoh.edu.sa\\
\vspace{5pt}
MUHAMMAD  S. SAEED\\
Department of Mathematics, University of Hail\\ Kingdom of Saudi Arabia\\
ms.saeed@uoh.edu.sa\\
\vspace{5pt}
ETAF S. ALSHAWARBEH\\
Department of Mathematics, University of Hail\\ Kingdom of Saudi Arabia\\
e.alshawarbeh@uoh.edu.sa\\
\end{center}

\begin{abstract}
In this paper we characterize groups according to the number of end vertices in the associated coprime graphs. An upper bound on the order of the group that depends on the number of end vertices is obtained. We also prove that $2-$groups are the only groups whose coprime graphs have odd number of end vertices. Classifications of groups with small number of end vertices in the coprime graphs are given. One of the results shows that $\mathbb{Z}_4$ and $\mathbb{Z}_2\times \mathbb{Z}_2$ are the only groups whose coprime graph has exactly three end vertices.   
\end{abstract}

\keywords{{\footnotesize \textit{Keywords}: Coprime Graphs, Finite Groups, End Vertices.}}

\subjclass{{\footnotesize \textit{Mathematics Subject Classification}: Primary \ 20F65, Secondary 05C25}}

\section{Introduction}

Throughout this paper, $G$ denotes a finite group. The order of a group $G$ is denoted by $|G|$ and the order of an element $g\in G$ is denoted by 𝑜$|g|$. For a general reference on group theory and graph theory, we refer the reader to the text-books of \cite{Ro02} and \cite{Bo76} respectively. The greatest common divisor of two integers $a$ and $b$ is denoted by $(a,b)$. The radical of an integer $n\geq 2$, denoted by $rad(n)$, is the product of the distinct prime divisors of $n$, and $rad(1)=1$.  

The coprime graph of a finite group $G$ is the simple graph $\Gamma_G$ whose vertices are the elements of $G$ with two vertices $x$ and $y$ are adjacent if $(|x|,|y|)=1$. It is clear that the identity element of $G$ (usually denoted by $e$) is adjacent with every other element; so the graph is connected with diameter at most $2$. The concept of coprime graphs was introduced by Sattanathan and  Kala \cite{Sa09} where they call them order prime graphs of finite groups. Later Ma et al \cite{Ma14} studied these graphs and called them coprime graphs of groups.  Further studies of these graphs were done in \cite{Do16,Ra16,Se17}.

The degree of a vertex in a graph is  the number of vertices in the graph that are adjacent to it. A vertex that has degree one is called an end vertex. For a group $G$, we denot by $E_G$ the set of all end vertices in $\Gamma_G$ other than the identity element. Note that $E_G$ may be empty. In \cite[section 3]{Ma14},  Ma et al  characterized the groups whose coprime graphs have exactly $0$, $1$, or $2$ end vertices, that is $|E_G|=0,1,2$. Then they ended the section with the following question:   

\begin{question}\cite[Question 3.7]{Ma14}
Is it possible to characterize all finite groups $G$ whose coprime graph contains precisely three end vertices?
\end{question}     

In \cite{Do16}, Dorbidi has proved a result that gives the number of end vertices in a coprime graph of any nilpotent group. 

In this paper our gaol is to classify groups according to the number of the end vertices in the associated coprime graphs. In Section (\ref{mr}), we present the main results. We prove that one of the end vertices of $\Gamma_G$ has order equal to a power of a prime $p$ if and only if $G$ is a $p-$group. We conclude from this result that $|E_G|$ is odd if and only if $G$ is a $2-$group. In addition, we obtain an upper bound on the order of the group that depends on the size of $E_G$.  Section (\ref{cl}) is devoted to classifications of groups whose coprime graphs have small number of end vertices mainly between $1$ and $10$. Corollary (\ref{e3}) states that $\mathbb{Z}_4$ and $\mathbb{Z}_2\times \mathbb{Z}_2$ are the only groups whose coprime graph has exactly three end vertices. This gives a complete answer to \cite[Question 3.7]{Ma14}. 

\section{Main Result}\label{mr}

We start this section by the following result, which characterize the coprime graphs of $p-$groups. For proof see \cite[Theorem 2.7]{Sa09} or \cite[Proposition 2.6]{Ma14} 

\begin{theorem}\label{pgs}
Let $G$ be a group. Then $\Gamma_G\cong K_{1,|G|-1}$ (star graph) if and only if $G$ is a $p-$group with some prime integer $p$. 
\end{theorem}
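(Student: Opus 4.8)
The plan is to prove the two implications separately, relying on the structure of element orders in a $p$-group for one direction and on Cauchy's theorem for the converse.

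For the forward direction, I would assume $G$ is a $p$-group, so that every element $g\in G$ has order $|g|=p^{k}$ for some $k\geq 0$. The identity $e$ has order $1$, hence $(|e|,|g|)=1$ for every $g\neq e$, which makes $e$ adjacent to all $|G|-1$ remaining vertices. For any two distinct non-identity elements $x,y$, their orders are $p^{a}$ and $p^{b}$ with $a,b\geq 1$, so $(|x|,|y|)=p^{\min(a,b)}>1$ and they are non-adjacent. Thus the only edges of $\Gamma_G$ are those joining $e$ to the other vertices, which is precisely the star $K_{1,|G|-1}$ centered at $e$.

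For the converse, I would argue by contraposition. Assume $G$ is not a $p$-group, so $|G|$ is divisible by at least two distinct primes $p$ and $q$. By Cauchy's theorem $G$ contains an element $x$ of order $p$ and an element $y$ of order $q$. Since $(p,q)=1$, the vertices $x$ and $y$ are adjacent; moreover each is adjacent to $e$. Hence $x$ has degree at least $2$ in $\Gamma_G$. Now a star $K_{1,|G|-1}$ has exactly one vertex of degree greater than $1$, namely its center, and that center must be $e$ because $e$ is adjacent to every other vertex and so has degree $|G|-1\geq 2$; every remaining vertex of the star has degree $1$. The existence of a non-central vertex $x$ of degree $\geq 2$ therefore contradicts $\Gamma_G\cong K_{1,|G|-1}$, and the contrapositive is established.

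The main obstacle, and really the only nontrivial input, is producing in a non-$p$-group two elements of coprime orders: this is exactly the role of Cauchy's theorem, since without it one cannot guarantee elements realizing two distinct prime orders. Everything else reduces to elementary computations with greatest common divisors of prime-power orders, together with the observation that the identity is adjacent to every other vertex.
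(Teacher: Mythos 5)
Your proof is correct and complete. Note that the paper does not actually prove this theorem---it cites \cite{Sa09} and \cite{Ma14} for the proof---so there is no in-paper argument to compare against; your argument (prime-power orders force non-adjacency among non-identity elements of a $p$-group, while Cauchy's theorem produces two adjacent non-identity vertices in any non-$p$-group) is precisely the standard proof found in those references. A small shortcut for the converse: the vertices $e$, $x$, $y$ form a triangle, and a star contains no triangle, which lets you skip the degree-counting identification of the center.
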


Now we prove the following useful lemma.

\begin{lemma}\label{rad}
Let $G$ be a group with $|G|\geq3$. Then
\begin{enumerate}[(i)]
\item $E_G=\{x\in G \mid rad(|x|)=rad(|G|)\}$,
\item for each $x\in E_G$, $\phi(\left|x\right|)\leq |E_G|$, ($\phi$ denotes the Euler's phi function) and
\item if there exists $x\in E_G$ such that $\phi(\left|x\right|)=|E_G|$, then $rad(|x|)=|x|$ and $\left\langle x\right\rangle$ is a unique cyclic subgroup of $G$ of order $|x|$.
\end{enumerate}
\end{lemma}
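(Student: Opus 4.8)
The plan is to establish each of the three parts in sequence, relying heavily on the fact that adjacency in $\Gamma_G$ depends only on the radical of the order of an element.

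For part (i), I would first unwind what it means for $x \neq e$ to be an end vertex. Since $e$ is adjacent to every element, $x$ is always adjacent to $e$; thus $x \in E_G$ precisely when $e$ is the \emph{only} neighbor of $x$, i.e. $x$ is adjacent to no nonidentity element. Now $x$ is adjacent to $y$ iff $(|x|,|y|)=1$, which holds iff $\mathrm{rad}(|x|)$ and $\mathrm{rad}(|y|)$ share no prime. So $x$ fails to be an end vertex exactly when there is some nonidentity $y$ whose order is coprime to $|x|$. The claim $E_G = \{x : \mathrm{rad}(|x|) = \mathrm{rad}(|G|)\}$ therefore reduces to showing: $x$ has no nonidentity coprime neighbor iff every prime dividing $|G|$ already divides $|x|$. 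One direction is easy --- if some prime $q \mid |G|$ does not divide $|x|$, then by Cauchy's theorem $G$ has an element of order $q$, which is a nonidentity neighbor of $x$, so $x \notin E_G$. Conversely, if $\mathrm{rad}(|x|) = \mathrm{rad}(|G|)$, then every nonidentity $y$ has $|y|$ sharing a prime with $|G|=\mathrm{rad}(|x|)$'s support, hence sharing a prime with $|x|$, so $(|x|,|y|)>1$ and $y$ is not adjacent to $x$; thus $x \in E_G$. I would note the hypothesis $|G|\geq 3$ rules out the degenerate $p$-group cases where the star structure of Theorem~\ref{pgs} makes every nonidentity vertex an end vertex.

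For part (ii), fix $x \in E_G$ and consider the cyclic subgroup $\langle x\rangle$. Its $\phi(|x|)$ generators all have order exactly $|x|$, hence the same radical as $x$, namely $\mathrm{rad}(|G|)$ by part (i); so each generator lies in $E_G$. This exhibits $\phi(|x|)$ distinct elements of $E_G$, giving $\phi(|x|) \leq |E_G|$ immediately.

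For part (iii), suppose equality $\phi(|x|) = |E_G|$ holds. Then the generators of $\langle x\rangle$ already exhaust $E_G$. The key observation is that if $\mathrm{rad}(|x|) \neq |x|$, i.e. $|x|$ is not squarefree, then $\langle x\rangle$ contains an element $z$ whose order is the proper divisor $|x|/p$ (or more carefully, an element whose order is a proper divisor of $|x|$ but has the \emph{same radical} as $|x|$); such $z$ still satisfies $\mathrm{rad}(|z|)=\mathrm{rad}(|x|)=\mathrm{rad}(|G|)$, so $z\in E_G$ by part (i), yet $z$ is not a generator of $\langle x\rangle$ --- contradicting that the generators exhaust $E_G$. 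This forces $\mathrm{rad}(|x|)=|x|$. For uniqueness of the cyclic subgroup of order $|x|$: any cyclic subgroup $H$ of order $|x|$ has $\phi(|x|)$ generators, each of order $|x|$ and hence in $E_G$; since $E_G$ is exactly the generator set of $\langle x\rangle$, these generators of $H$ lie in $\langle x\rangle$, and a cyclic group is generated by any one of its generators, so $H=\langle x\rangle$. I expect the main obstacle to be handling part (iii) cleanly: one must carefully produce a non-generator end vertex inside $\langle x\rangle$ when $|x|$ is not squarefree, and verify its radical equals $\mathrm{rad}(|G|)$, which is where the equality hypothesis is genuinely used to derive the contradiction.
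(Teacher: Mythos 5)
Your proposal is correct and follows essentially the same route as the paper: Cauchy's theorem plus Lagrange for part (i), counting the $\phi(|x|)$ generators of $\left\langle x\right\rangle$ for part (ii), and for part (iii) exhibiting a non-generator element of $\left\langle x\right\rangle$ with the same radical (the paper takes $x^d$ with $d=|x|/rad(|x|)$; your $x^p$ with $p^2$ dividing $|x|$ works equally well) together with the generator argument for uniqueness. One minor caveat: your parenthetical explanation of the hypothesis $|G|\geq 3$ is off --- the lemma applies perfectly well to $p$-groups of order at least $3$, where indeed every nonidentity vertex is an end vertex in full agreement with (i); the hypothesis only serves to exclude the degenerate orders $|G|\leq 2$ (for $|G|=1$ part (i) literally fails, since $rad(|e|)=1=rad(|G|)$).
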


\begin{proof}
(i) By Cauchy Theorem  we note that for each prime number $p$ dividing $|G|$ there is an element of $G$ whose order is $p$. So an element $x\in G$ is an end vertex of $\Gamma_G$ if and only if every prime divisor of $|G|$ divides $|x|$, which is equivalent to say that $rad(|x|)=rad(|G|)$. 

(ii) Let $x\in E_G$. Then any generator of $\left\langle x\right\rangle$ is in $E_G$. Since $\left\langle x\right\rangle$ has exactly $\phi(|x|)$ generators, we must have $\phi(|x|)\leq |E_G|$.

(iii) Suppose that  $\phi(\left|x\right|)=|E_G|$. Then $E_G$ consists of the generators of $\left\langle x\right\rangle$, that is $E_G=\{x^j \mid (j,|x|)=1\}$. Assume that $rad(|x|)< |x|$ and let $d=\frac{|x|}{rad(|x|)}$. Then  $|x^d|=\frac{|x|}{d}=rad(|x|)$. So $rad(|x^d|)=rad(|x|)$, which yields $x^d\in E_G$. But $x^d$ is not a generator of  $\left\langle x\right\rangle$, a contradiction. So $rad(|x|)=|x|$. Moreover, if $|y|=|x|$ for some $y\in G$, then $y\in E_G$. So $\left\langle y\right\rangle=\left\langle x\right\rangle$. This completes the proof.
\end{proof}

\begin{theorem}\label{epg}
Let $G$ be a finite group. Then $E_G$ contains a non-identity element whose order is a power of prime $p$ if and only if $G$ is a $p-$group. Moreover, $|G|=|E_G|+1$.
\end{theorem}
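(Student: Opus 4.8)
The plan is to prove both directions of the equivalence using the radical characterization of $E_G$ from Lemma \ref{rad}(i) together with the star-graph description of $p$-groups in Theorem \ref{pgs}, and then to read off the order formula directly.

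For the forward direction, I would begin by assuming that $E_G$ contains a non-identity element $x$ with $|x|=p^k$ for some $k\geq 1$. Since $x\in E_G$, Lemma \ref{rad}(i) gives $rad(|x|)=rad(|G|)$. But the order of $x$ is a prime power, so $rad(|x|)=p$; hence $rad(|G|)=p$. Because $rad(|G|)$ is the product of the distinct prime divisors of $|G|$, this forces $p$ to be the only prime dividing $|G|$, i.e. $G$ is a $p$-group.

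For the converse, I would assume $G$ is a $p$-group and invoke Theorem \ref{pgs}, which says $\Gamma_G\cong K_{1,|G|-1}$. In a star graph the single central vertex (the identity $e$) is adjacent to all others, while every non-central vertex has degree one. Thus every non-identity element of $G$ lies in $E_G$; since $G$ is a $p$-group, each such element has order a power of $p$, so $E_G$ certainly contains a non-identity element of prime-power order. This simultaneously shows that $E_G$ is exactly the set of the $|G|-1$ non-identity elements, giving $|E_G|=|G|-1$, equivalently $|G|=|E_G|+1$, which proves the \emph{moreover} clause.

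I do not expect a serious obstacle here, since the argument is essentially a matter of combining two results already available. The one point requiring a moment of care is the forward step: one must observe that $rad(|x|)=p$ whenever $|x|$ is a power of $p$, and that matching this single prime to $rad(|G|)$ rules out every other prime divisor of $|G|$ at once. Everything else is a direct application of the star-graph structure.
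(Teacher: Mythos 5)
Your proposal is correct and follows essentially the same route as the paper's own proof: Lemma \ref{rad}(i) gives $rad(|G|)=rad(|x|)=p$ for the forward direction, and Theorem \ref{pgs} yields the star graph for the converse and the count $|G|=|E_G|+1$. The only cosmetic difference is that the paper attaches the \emph{moreover} clause to the forward direction while you attach it to the converse, but since both sides of the equivalence reduce to the same star-graph structure, this is immaterial.
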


\begin{proof}
Let $x$ be an end vertex in $\Gamma_G$ such that $|x|=p^k$ for some positive integer $k$. Then by Lemma (\ref{rad}), $p=rad(|x|)=rad(|G|)$, and hence $G$ is a $p-$group. Moreover, Theorem (\ref{pgs}) implies that $\Gamma_G$ is a star graph, which yields every non-identity element of $G$ is an end vertex. Hence $|G|=|E_G|+1$.  The converse follows directly by applying Theorem (\ref{pgs}).
\end{proof}

The following result gives a characterization of all finite groups whose coprime graphs have odd number of end vertices.

\begin{theorem}\label{oev}
Let $G$ be a finite group. Then $|E_G|$ is odd if and only if $G$ is a $2-$group. Moreover, $|E_G|=2^n-1$ for some positive integer $n$.
\end{theorem}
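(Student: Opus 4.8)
The plan is to treat the two implications separately, with essentially all of the content sitting in the forward direction. For the reverse implication, I would suppose $G$ is a $2$-group and simply invoke Theorem~\ref{epg} (equivalently Theorem~\ref{pgs}): it gives $|G|=|E_G|+1$, and since $|G|=2^n$ for some positive integer $n$, we get $|E_G|=2^n-1$, which is odd. This at the same time disposes of the ``moreover'' clause, so no separate argument for it is needed.

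For the forward direction, suppose $|E_G|$ is odd. My target is to exhibit a single element of order $2$ lying in $E_G$, because then Theorem~\ref{epg} applied with $p=2$ immediately forces $G$ to be a $2$-group. Since $|E_G|$ is odd it is in particular nonzero, so $E_G\neq\emptyset$ and hence $|G|\geq 2$; the case $|G|=2$ gives $G\cong\mathbb{Z}_2$, which is already a $2$-group, so I may assume $|G|\geq 3$ and use Lemma~\ref{rad}. The key device is the inversion map $\iota:x\mapsto x^{-1}$. First I would verify that $\iota$ maps $E_G$ into itself: since $|x^{-1}|=|x|$ we have $rad(|x^{-1}|)=rad(|x|)=rad(|G|)$, so by Lemma~\ref{rad}(i), $x\in E_G$ implies $x^{-1}\in E_G$. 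Thus $\iota$ is an involution of the finite set $E_G$.

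Next I would run a parity count under $\iota$. The elements of $E_G$ not fixed by $\iota$ split into disjoint two-element orbits $\{x,x^{-1}\}$ and so contribute an even number to $|E_G|$. Consequently $|E_G|$ has the same parity as the number of fixed points of $\iota$, that is, the number of $x\in E_G$ with $x=x^{-1}$, equivalently $x^2=e$. Because $e\notin E_G$, each such fixed point is an element of order exactly $2$. As $|E_G|$ is odd, the number of these involutions is odd, hence positive, so there exists $x\in E_G$ with $|x|=2=2^1$. Applying Theorem~\ref{epg} with $p=2$ then yields that $G$ is a $2$-group, which completes the forward implication.

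There is no genuine obstacle in this argument; the only point demanding attention is confirming that $\iota$ really preserves $E_G$, so that the counting is carried out inside $E_G$ rather than inside all of $G$, and this is immediate from the radical characterization in Lemma~\ref{rad}(i). Everything else reduces to the familiar fact that an involution of a finite set has a fixed-point count of the same parity as the set.
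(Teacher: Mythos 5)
Your proposal is correct and follows essentially the same route as the paper: the converse via Theorem~\ref{epg} (a $2$-group has $|E_G|=|G|-1=2^n-1$), and the forward direction by pairing each $x\in E_G$ with $x^{-1}$, extracting a fixed point of the inversion map from the odd parity, noting it has order $2$, and applying Theorem~\ref{epg}. Your version merely makes explicit a few details the paper leaves implicit (that $e\notin E_G$ so fixed points have order exactly $2$, and the small-order case before invoking Lemma~\ref{rad}).
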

\begin{proof}
Suppose $|E_G|$ is odd. Since $|x|=|x^{-1}|$ for every $x\in G$,  $x\in E_G$ if and only if $x^{-1}\in E_G$. Furthermore, $|E_G|$ is odd, so there must be $x\in E_G$ such that $x=x^{-1}$. This implies that $E_G$ contains an element of order $2$. Thus by Theorem (\ref{epg}) we conclude that $G$ is a $2-$group, and $|E_G|=|G|-1$.

Now, if $G$ is a $2-$group, then $|E_G|=|G|-1$ which is odd. This proves the converse.
\end{proof}

In order to classify all groups (up to isomorphism) whose coprime graphs have the same number of end vertices, a natural question arises is that how many possible orders of such groups are there? In the case $|E_G|=0$, one can find that there are infinitely many groups whose coprime graphs have no end vertices. For instance, if $q\geq 5$ is a prime then $E_{S_3\times \mathbb{Z}_q}$ is empty. As a matter of fact if $G$ is a nonabelian finite group and $rad(|G|)=|G|$ then $E_G$ is empty. For the remainder of this section the goal is to establish an upper bound on the order of the group $G$ depending on the size of $E_G$. We will only be concerned with the cases when $|E_G|$ is even because the cases when $|E_G|$ is odd are completely solved in Theorem (\ref{oev}). 

The centralizer of an element $x$ in a group $G$ is defined to be $C_G(x)=\{g\in G \mid gx=xg\}$, and the conjugacy class of $G$ containing $x$ is $Cl_G(x)=\{gxg^{-1} \mid g\in G\}$. It is well known in group theory that $C_G(x)$ is a subgroup of $G$ and its index equal to the size of $Cl_G(x)$; that is $[G:C_G(x)]=|Cl_G(x)|$. The following theorem gives a characterization of the centralizers of the end vertices. 

\begin{theorem}\label{cen}
Let $G$ be a group and let $x\in E_G$. Then 
\[C_G(x) = \bigcup_{_{y\in E_G\cap C_G(x)}}\left\langle y\right\rangle.\]
\end{theorem}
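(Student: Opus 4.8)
The plan is to prove the two inclusions separately, with the substance lying entirely in the reverse inclusion. The inclusion $\bigcup_{y\in E_G\cap C_G(x)}\langle y\rangle \subseteq C_G(x)$ is immediate: if $y\in C_G(x)$ then $y$ commutes with $x$, hence so does every power of $y$, giving $\langle y\rangle\subseteq C_G(x)$; taking the union over all such $y$ yields the claim.

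For the reverse inclusion $C_G(x)\subseteq \bigcup_{y\in E_G\cap C_G(x)}\langle y\rangle$, I would fix an arbitrary $g\in C_G(x)$ and construct an explicit $y\in E_G\cap C_G(x)$ with $g\in\langle y\rangle$. The first observation is that since $g$ commutes with $x$, the subgroup $H=\langle x,g\rangle$ is abelian and every element of $H$ commutes with $x$; in particular $H\subseteq C_G(x)$, so any $y$ built inside $H$ will automatically lie in $C_G(x)$.

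The construction then proceeds prime by prime. Write $rad(|G|)=p_1\cdots p_k$. Because $x\in E_G$, Lemma~\ref{rad}(i) gives $rad(|x|)=rad(|G|)$, so every $p_i$ divides $|x|$. Using the primary decomposition inside the abelian group $H$, write $x=\prod_i x_i$ and $g=\prod_i g_i$, where $x_i\in\langle x\rangle$ and $g_i\in\langle g\rangle$ are the $p_i$-parts, noting each $x_i\neq e$. For each $i$ I set $y_i=g_i$ when $g_i\neq e$ and $y_i=x_i$ when $g_i=e$, and let $y=\prod_i y_i$. Each $y_i\neq e$ is then a nontrivial $p_i$-element, so the $y_i$ have pairwise coprime orders and commute; hence $\langle y\rangle=\langle y_1\rangle\times\cdots\times\langle y_k\rangle$ is cyclic with $rad(|y|)=p_1\cdots p_k=rad(|G|)$, so $y\in E_G$ by Lemma~\ref{rad}(i). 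Since $g_i\in\langle y_i\rangle$ in both cases, we get $g=\prod_i g_i\in\langle y\rangle$, and $y\in H\subseteq C_G(x)$, completing the inclusion.

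The step I expect to require the most care is verifying that the constructed $y$ simultaneously satisfies $g\in\langle y\rangle$ and $rad(|y|)=rad(|G|)$: the tension is that $g$ alone need not have all primes $p_i$ dividing its order, so I must graft on the missing prime-parts of $x$ without destroying membership of $g$. The prime-by-prime choice resolves this precisely because $\langle y\rangle$ splits as the internal direct product of its primary components, reducing $g\in\langle y\rangle$ to the $k$ independent conditions $g_i\in\langle y_i\rangle$, each of which holds by construction.
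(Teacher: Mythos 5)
Your proof is correct and follows essentially the same strategy as the paper's: both supplement $g\in C_G(x)$ with an element of $\left\langle x\right\rangle$ carrying the primes missing from $|g|$, producing an end vertex $y\in E_G\cap C_G(x)$ with $g\in\left\langle y\right\rangle$. The paper packages this more economically---taking $z=x^{|x|/d}$ with $d=rad(|G|)/rad(|g|)$, so that $gz\in E_G\cap C_G(x)$, and recovering $g$ via $(gz)^d=g^d$ together with $(d,|g|)=1$---whereas you assemble $y$ prime-by-prime through primary decomposition, but the underlying construction is the same.
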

\begin{proof}
Let $H= \bigcup_{y\in E_G\cap C_G(x)}\left\langle y\right\rangle$. If $y\in  E_G\cap C_G(x)$ then $y\in C_G(x)$, and hence 
$\left\langle y \right\rangle\subseteq C_G(x)$. So $H\subseteq C_G(x)$. Now we need to show that $C_G(x) \subseteq H$.  Clearly $E_G\cap C_G(x)\subseteq H$. Now, let $g\in C_G(x)$. If $rad(|g|)=rad(|G|)$ then $g\in E_G \cap C_G(x) \subseteq F$. Assume $rad(|g|)< rad(|G|)$, then we need to show that $g\in \left\langle y\right\rangle$ for some $y\in E_G\cap C_G(x)$. Let $d=rad(|G|)/rad(|g|)$ and let $z=x^{|x|/d}$. Then we have $(|z|,|g|)=(d,|g|)=1$ and $zg=gz$. So $|gz|=|g||z|$, which implies $rad(|gz|)=rad(|g|)rad(|z|)=rad(|G|)$. So $gz\in E_G\cap C_G(x)$, which yields $\left\langle gz \right\rangle \subseteq F$. Now,  
\[(gz)^d=\left(gx^{|x|/d}\right)^d=g^dx^{|x|}=g^d.\]
Also since $(|g|,d)=1$,  we have $\left\langle g\right\rangle=\left\langle g^d\right\rangle$. So $\left\langle g\right\rangle=\left\langle (gz)^d\right\rangle$, and hence $g\in \left\langle (gz)^d\right\rangle \subseteq F$. Therefore $C_G(x) \subseteq F$.
\end{proof}

The next theorem gives an upper bound on the order of $G$ depending on $|E_G|$.

\begin{theorem}\label{lb}
Let $n$ be a positive integer and $M$ be the largest possible integer such that $\phi(M)\leq 2n$. If $|E_G|=2n$ then 
\[|G| \leq 2n(Mn-n+1).\].
\end{theorem}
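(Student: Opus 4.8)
The plan is to fix an end vertex $x\in E_G$ (which exists since $|E_G|=2n\geq 2$) and exploit the class equation $|G|=|C_G(x)|\cdot|Cl_G(x)|$, bounding the two factors separately. The conjugacy class is the easy factor: every conjugate $gxg^{-1}$ has the same order as $x$, hence the same radical, so $Cl_G(x)\subseteq E_G$ and $|Cl_G(x)|\leq|E_G|=2n$. The real work therefore lies in showing $|C_G(x)|\leq Mn-n+1$.

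For the centralizer I would invoke Theorem (\ref{cen}), which writes $C_G(x)=\bigcup_{y\in S}\langle y\rangle$ with $S=E_G\cap C_G(x)$. Grouping the elements of $S$ according to the cyclic subgroup they generate, say $\langle y_1\rangle,\dots,\langle y_k\rangle$ are the distinct such subgroups, every generator of each $\langle y_i\rangle$ is again an end vertex (same order, hence same radical, by Lemma (\ref{rad})(i)) lying in $C_G(x)$, so $S$ is the disjoint union of these generator sets and $|S|=\sum_{i=1}^{k}\phi(|y_i|)$. Since all the $\langle y_i\rangle$ share the identity, a crude union bound gives $|C_G(x)|\leq 1+\sum_{i=1}^{k}(|y_i|-1)$, and Lemma (\ref{rad})(ii) together with the definition of $M$ forces each $|y_i|\leq M$ (because $\phi(|y_i|)\leq|E_G|=2n$ and $M$ is the largest integer with $\phi(M)\leq 2n$); hence $|C_G(x)|\leq 1+k(M-1)$.

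The decisive step, and the one I expect to carry the real content, is the bound $k\leq n$. This is exactly where the parity hypothesis enters: if some end vertex had order $2$, then by Theorem (\ref{epg}) $G$ would be a $2$-group, so $|E_G|=|G|-1$ would be odd, contradicting $|E_G|=2n$. Thus every end vertex has order at least $3$, whence $\phi(|y_i|)\geq 2$ for every $i$, and from $2k\leq\sum_{i=1}^{k}\phi(|y_i|)=|S|\leq|E_G|=2n$ we obtain $k\leq n$. Substituting gives $|C_G(x)|\leq 1+n(M-1)=Mn-n+1$, and combining this with $|Cl_G(x)|\leq 2n$ through the class equation yields $|G|\leq 2n(Mn-n+1)$. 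The main obstacle is precisely recognizing that the evenness of $|E_G|$ is the device that rules out order-$2$ end vertices and thereby halves the count of available cyclic subgroups; without this observation one only gets $k\leq 2n$ and a bound weaker by roughly a factor of two.
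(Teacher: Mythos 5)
Your proof is correct and follows essentially the same route as the paper's: fix $x\in E_G$, bound $|Cl_G(x)|\leq 2n$ via $Cl_G(x)\subseteq E_G$, decompose $C_G(x)$ using Theorem (\ref{cen}), bound the orders of the cyclic subgroups by $M$, and use the evenness of $|E_G|$ (via Theorem (\ref{epg})) to rule out order-$2$ end vertices and gain the crucial factor of $2$. The only difference is bookkeeping: the paper weights the sum over all of $E_G\cap C_G(x)$ by $1/m$ with $m=\min_{y\in E_G}\phi(|y|)\geq 2$, whereas you count the $k\leq n$ distinct cyclic subgroups directly through the disjoint generator sets --- the same estimate in slightly cleaner clothing.
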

\begin{proof}
Assume that $|E_G|=2n$ and let $x\in E_G$. Then $Cl_G(x)\subseteq E_G$, and so $|Cl_G(x)|\leq |E_G|=2n$. 
Let $m=\displaystyle\min_{y\in E_G} \phi(|y|)$. Then for each $y\in E_G\cap C_G(x)$, the size of the set $\{z\in E_G\cap C_G(x)| \left\langle z\right\rangle=\left\langle y\right\rangle\}$ is at least $m$. Also we know that $e\in \left\langle y\right\rangle$ for all $y$. Theorem (\ref{cen}) implies that 
\begin{equation} \label{eq:bd} |C_G(x)| = \left|\bigcup_{_{y\in E_G\cap C_G(x)}}\left\langle y\right\rangle\right| \leq 1+\frac{1}{m}\sum_{_{y\in E_G\cap C_G(x)}}(|y|-1).\end{equation}

Now, from Lemma (\ref{rad}) it is clear that $|y|\leq M$ for all $y\in E_G$. Substituting this in \ref{eq:bd} yields 

\[|C_G(x)|\leq 1+\frac{(M-1)|E_G\cap C_G(x)|}{m}.\] Since no element in $E_G$ has order $2$, we obtain that $m\geq 2$.  Thus  

\[|C_G(x)|\leq 1+\frac{(M-1)|E_G\cap C_G(x)|}{2}\leq 1+\frac{(M-1)|E_G|}{2}=1+\frac{(M-1)2n}{2}=Mn-n+1.\]

\[|G|=[G:C_G(x)]|C_G(x)|=|Cl_G(x)||C_G(x)|\leq 2n(Mn-n+1).\]

\end{proof}

Under some conditions the bound given in Theorem (\ref{lb}) can be improved. The following theorem gives a better bound on the size of the group under the condition that all end vertices are generated by the same element. We find this results is very useful in the next section.
\begin{theorem}\label{bound}
Let $G$ be a group with $|G|\geq3$ and $|E_G|\geq1$. If there exists $x \in G$ such that $E_G\subseteq \left\langle x\right\rangle$, then $|G|\leq |x|\phi(|x|)$.
\end{theorem}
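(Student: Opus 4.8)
The plan is to leverage Theorem (\ref{cen}) by first showing that the given element $x$ must itself be an end vertex, then that its centralizer collapses to $\left\langle x\right\rangle$; the orbit--stabilizer identity $|G|=|Cl_G(x)|\,|C_G(x)|$ then delivers the bound once the conjugacy class is controlled. First I would check $x\in E_G$. Since $|E_G|\geq1$, pick any $w\in E_G\subseteq\left\langle x\right\rangle$; by Lemma (\ref{rad})(i), $rad(|w|)=rad(|G|)$. As $|w|$ divides $|x|$ we get $rad(|G|)=rad(|w|)\mid rad(|x|)$, while every prime dividing $|x|$ divides $|G|$ (Lagrange), giving the reverse divisibility. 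Hence $rad(|x|)=rad(|G|)$, so Lemma (\ref{rad})(i) places $x\in E_G$; note also $w\neq e$ forces $x\neq e$.

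Next, because $\left\langle x\right\rangle$ is abelian and $E_G\subseteq\left\langle x\right\rangle$, every element of $E_G$ commutes with $x$, so $E_G\cap C_G(x)=E_G$. Applying Theorem (\ref{cen}) to $x$ then gives
\[C_G(x)=\bigcup_{y\in E_G}\left\langle y\right\rangle.\]
Each $\left\langle y\right\rangle\subseteq\left\langle x\right\rangle$, and the summand $y=x$ contributes $\left\langle x\right\rangle$ itself, so this union is exactly $\left\langle x\right\rangle$. Thus $C_G(x)=\left\langle x\right\rangle$ and $|C_G(x)|=|x|$.

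Finally I would bound the conjugacy class. Conjugation preserves order, hence preserves the radical, so $Cl_G(x)\subseteq E_G\subseteq\left\langle x\right\rangle$, and every element of $Cl_G(x)$ has order exactly $|x|$. Therefore $Cl_G(x)$ lies inside the set of generators of $\left\langle x\right\rangle$, which has cardinality $\phi(|x|)$, giving $|Cl_G(x)|\leq\phi(|x|)$. Combining the two estimates,
\[|G|=|Cl_G(x)|\,|C_G(x)|\leq\phi(|x|)\,|x|.\]

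The step requiring care is the use of $x$ itself rather than an arbitrary end vertex: this is precisely what forces $Cl_G(x)$ to consist of order-$|x|$ elements, hence of generators of $\left\langle x\right\rangle$, so that it is bounded by $\phi(|x|)$ rather than by the potentially larger $|E_G|$ (which may contain proper powers of $x$). The other key point is the identification $C_G(x)=\left\langle x\right\rangle$, which combines Theorem (\ref{cen}) with the commutativity furnished by the hypothesis $E_G\subseteq\left\langle x\right\rangle$; both inclusions must be verified to pin the centralizer down exactly.
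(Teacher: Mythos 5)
Your proof is correct and follows essentially the same route as the paper's: first deduce $x\in E_G$ from $rad(|x|)=rad(|G|)$, then use Theorem (\ref{cen}) to identify $C_G(x)$ with $\left\langle x\right\rangle$, bound $|Cl_G(x)|$ by $\phi(|x|)$ via $Cl_G(x)\subseteq E_G\subseteq\left\langle x\right\rangle$, and conclude with orbit--stabilizer. You are in fact slightly more explicit than the paper at two points it leaves implicit --- that conjugates of $x$ must be generators of $\left\langle x\right\rangle$ (justifying the $\phi(|x|)$ bound), and that the union in Theorem (\ref{cen}) collapses exactly to $\left\langle x\right\rangle$ --- but the argument is the same.
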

\begin{proof}
Assume that   $E_G\subseteq \left\langle x\right\rangle$ for some $x\in G$ and let $x^t\in E_G$. So $rad(|x^t|)=rad(|G|)$. Since $rad(|x^t|)$ divides $rad(|x|)$ which divides $rad(|G|)$, we obtian that $rad(|x|)=rad(|G|)$, which yields $x\in E_G$. This implies that $Cl_G(x)\subseteq E_G\subseteq\left\langle x\right\rangle$. Thus $|Cl_G(x)|\leq \phi(|x|)$. Also Theorem (\ref{cen}) implies that $C_G(x)\subseteq \left\langle x\right\rangle$. Thus we have  \[[G:\left\langle x\right\rangle]=\left[G:C_G\left(x\right)\right]=\left|Cl\left(x\right)\right|\leq \phi(|x|).\]

Hence $|G|=[G:\left\langle x\right\rangle]|x|\leq |x|\phi(|x|)$.
\end{proof}

\begin{remark}\label{sharp} Taking $|E_G|=n$ in Theorem (\ref{bound}) yields $|G|\leq Mn$.  This bound is met by some groups as we will see later in Theorem (\ref{e2}). 
\end{remark}

\section{Classifications of groups with small number of end vertices}\label{cl}

In this section we use the theorems presented in Section (\ref{mr}) to classify groups whose coprime graphs have small number of end vertices. We start with the odd cases because they follow directly from Theorem (\ref{oev}). It is shown in \cite{Ma14} that $|E_G|=1$ if and only if $G\cong \mathbb{Z}_2$. We list this result here for completeness and we give a shorter proof for it.  

\begin{corollary}\label{e1} Let $G$ be a finite group. Then $|E_G|=1$ if and only if $G\cong \mathbb{Z}_2$.
\end{corollary}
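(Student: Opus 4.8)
The plan is to obtain this as an immediate consequence of Theorem \ref{oev}, since $1$ is an odd integer. First I would observe that the hypothesis $|E_G|=1$ makes $|E_G|$ odd, so Theorem \ref{oev} applies and forces $G$ to be a $2$-group. That same theorem records the identity $|E_G|=|G|-1$ for a $2$-group (equivalently, this is the content of Theorem \ref{epg} combined with the star-graph structure of Theorem \ref{pgs}). Substituting $|E_G|=1$ gives $|G|=|E_G|+1=2$, and the only group of order $2$ is $\mathbb{Z}_2$, establishing the forward direction.

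For the converse I would simply note that $\mathbb{Z}_2$ is a $2$-group, so by Theorem \ref{pgs} its coprime graph is the star $K_{1,|G|-1}=K_{1,1}$, a single edge joining the identity $e$ to the unique non-identity element. That non-identity element has degree one and is therefore an end vertex, while $e$ is excluded from $E_G$ by definition; hence $|E_G|=1$.

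I do not expect any real obstacle here: the substantive work has already been done in Theorems \ref{pgs}, \ref{epg}, and \ref{oev}, and the corollary amounts to pinning down the value $n=1$ in the formula $|E_G|=2^n-1$ of Theorem \ref{oev}. The only point requiring a word of care is that $|G|=2$ admits no exotic (e.g. non-abelian) possibilities, which is immediate. Thus the proof should be a short two-line deduction in each direction.
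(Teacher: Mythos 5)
Your proposal is correct and follows essentially the same route as the paper: the paper's one-line proof ``$|E_G|=1\Leftrightarrow |G|=2 \Leftrightarrow G\cong \mathbb{Z}_2$'' is exactly your chain of deductions, with the first equivalence resting implicitly on Theorems \ref{oev}/\ref{epg}/\ref{pgs} as you spell out. Your explicit remark that in $K_{1,1}$ the identity is excluded from $E_G$ by definition is a careful touch the paper leaves tacit, but it is the same argument.
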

\begin{proof} $|E_G|=1\Leftrightarrow |G|=2 \Leftrightarrow G\cong \mathbb{Z}_2$. \end{proof}

The following result gives a complete answer to question 3.7. in \cite{Ma14}.

\begin{corollary}\label{e3}
For a finite group $G$, $|E_G|=3$ if and only if $G\cong \mathbb{Z}_4$ or $G\cong \mathbb{Z}_2\times \mathbb{Z}_2$.
\end{corollary}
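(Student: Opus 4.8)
The plan is to obtain this as an immediate consequence of Theorem \ref{oev}, exactly in the spirit of the short proof given for Corollary \ref{e1}. First I would observe that $3$ is odd, so the hypothesis $|E_G|=3$ puts us squarely in the situation covered by Theorem \ref{oev}: that theorem forces $G$ to be a $2$-group and, as part of its conclusion, gives $|E_G|=|G|-1$. Substituting $|E_G|=3$ then yields $|G|-1=3$, i.e. $|G|=4$.

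Once the order is pinned down to $4$, I would appeal to the elementary classification of groups of order $4$: up to isomorphism the only such groups are $\mathbb{Z}_4$ and $\mathbb{Z}_2\times\mathbb{Z}_2$. This settles the forward implication, since any $G$ with $|E_G|=3$ must be one of these two.

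For the converse, I would note that both $\mathbb{Z}_4$ and $\mathbb{Z}_2\times\mathbb{Z}_2$ are $2$-groups of order $4$, so by Theorem \ref{pgs} their coprime graphs are stars $K_{1,3}$; hence every non-identity element is an end vertex and $|E_G|=|G|-1=3$. The whole statement is therefore a direct corollary of Theorem \ref{oev}, and there is no genuine obstacle to overcome: the only input beyond the cited theorem is the standard fact that there are exactly two isomorphism classes of groups of order $4$, which requires no work here.
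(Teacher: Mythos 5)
Your proposal is correct and matches the paper's own proof, which is just the compressed chain $|E_G|=3\Leftrightarrow |G|=4\Leftrightarrow G\cong\mathbb{Z}_4$ or $\mathbb{Z}_2\times\mathbb{Z}_2$, with the first equivalence resting on Theorem (\ref{oev}) exactly as you spell out. You have simply made explicit the appeals to Theorem (\ref{oev}) for the forward direction and to Theorem (\ref{pgs}) for the converse that the paper leaves implicit.
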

\begin{proof}
$|E_G|=3\Leftrightarrow |G|=4 \Leftrightarrow G\cong \mathbb{Z}_4 \text { or }G\cong \mathbb{Z}_2\times \mathbb{Z}_2$.
\end{proof}

Theorem (\ref{oev}) states that the only possible odd values for $|E_G|$ are of the form $2^n-1$. So there are no groups $G$ for which  
$E_G$ consists of five elements, nine elements, eleven elements, ... etc. Using similar arguments as in Corollaries (\ref{e1}) and (\ref{e3}) it can be shown that  $|E_G|=7$ if and only if $|G|=8$, which means $G$ could be any one of the five groups of order eight.  

For an integers $k\geq 2$, $D_{2k}$ denote the dihedral group of order $2k$, and $Dic_{4k}$ denote the dicyclic group of order $4k$ which has presentation $\langle a,b \mid a^k=b^2=e, bab^{-1}=a^{-1}\rangle$. For a prime power $q$, $GA(1,q)$ denotes the general affine group of degree $1$ over the field of $q$ elements. Next we deal with some cases in which $|E_G|$ is even. The following theorem classifies all groups whose coprime graphs have two end vertices, and this is a complete improvement of \cite[Theorem 3.5]{Ma14}.

\begin{theorem}\label{e2}
For a finite group $G$, $|E_G|=2$ if and only if $G\cong \mathbb{Z}_3, \mathbb{Z}_6, D_{12}, Dic_{12}$ 
\end{theorem}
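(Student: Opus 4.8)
The plan is to prove the nontrivial (forward) direction by first bounding $|G|$ and then enumerating the few surviving orders, and to dispose of the converse by a direct count of end vertices in each of the four listed groups. The first point I would stress is that, in contrast with the odd case, an even value of $|E_G|$ does \emph{not} force $G$ to be non-$p$-group: Theorem (\ref{oev}) only excludes $2$-groups when $|E_G|$ is odd, so an odd-order $p$-group can perfectly well have $|E_G|$ even. Overlooking this is the easiest way to lose the solution $\mathbb{Z}_3$. I would therefore split the argument into two cases according to whether or not $G$ is a $p$-group.

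If $G$ is a $p$-group, then by Theorem (\ref{epg}) (or directly Theorem (\ref{pgs})) we have $|E_G|=|G|-1$, so $|E_G|=2$ forces $|G|=3$ and hence $G\cong\mathbb{Z}_3$. If $G$ is not a $p$-group, then $rad(|G|)$ has at least two distinct prime divisors. For any $x\in E_G$, Lemma (\ref{rad})(ii) gives $\phi(|x|)\leq 2$, so $|x|\in\{1,2,3,4,6\}$; since $rad(|x|)=rad(|G|)$ is divisible by two primes, the only possibility is $|x|=6$. Thus $rad(|G|)=6$ and every end vertex has order $6$. Because $\phi(6)=2=|E_G|$, Lemma (\ref{rad})(iii) shows that the two end vertices are exactly the two generators of a single cyclic subgroup $\langle x\rangle$ of order $6$, so $E_G\subseteq\langle x\rangle$. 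Theorem (\ref{bound}) then yields $|G|\leq|x|\phi(|x|)=12$, and since $2,3\mid|G|$ we conclude $|G|\in\{6,12\}$.

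It then remains to run through the groups of orders $6$ and $12$, keeping those in which exactly two elements have radical-of-order equal to $6$ (equivalently, order $6$ or $12$), since these are precisely the end vertices. For $|G|=6$, the two generators of $\mathbb{Z}_6$ are its only end vertices, whereas $S_3$ has no element of order $6$ and is discarded. For $|G|=12$, I would examine the five groups $\mathbb{Z}_{12}$, $\mathbb{Z}_2\times\mathbb{Z}_6$, $A_4$, $D_{12}$, $Dic_{12}$: counting elements of order $6$ (and noting that an order-$12$ element, having $\phi(12)=4>2$, cannot be an end vertex in this case) eliminates $\mathbb{Z}_{12}$ and $\mathbb{Z}_2\times\mathbb{Z}_6$ as having too many, and $A_4$ as having none, leaving $D_{12}$ and $Dic_{12}$, each with its two elements of order $6$ as its only end vertices.

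I expect the main obstacle to be the bookkeeping in this last step, namely recalling the complete list of the five isomorphism types of order $12$ and verifying the order-$6$ element counts in $D_{12}$ and $Dic_{12}$; combined with the need to keep the $p$-group case (and hence $\mathbb{Z}_3$) separate, this is where care is required, while the reduction to $|G|\leq12$ is entirely forced by Lemma (\ref{rad}) and Theorem (\ref{bound}). The converse is then immediate: each of $\mathbb{Z}_3,\mathbb{Z}_6,D_{12},Dic_{12}$ is checked to have exactly two end vertices by the same counts.
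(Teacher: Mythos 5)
Your proposal is correct and follows essentially the same route as the paper: split off the $p$-group case (giving $\mathbb{Z}_3$), use Lemma (\ref{rad}) to force every end vertex to have order $6$, deduce $E_G\subseteq\langle x\rangle$ (the paper does this via the inverse-pairing argument, you via Lemma (\ref{rad})(iii) — an immaterial difference), apply Theorem (\ref{bound}) to get $|G|\in\{6,12\}$, and finish by checking the groups of those orders. Your write-up is in fact slightly more explicit than the paper's, which merely asserts the order-$12$ check rather than listing the five isomorphism types.
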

\begin{proof}
Suppose $|E_G|=2$. If there is $x\in E_G$ such that $rad(|x|)=p$, where $p$ is a prime, then $G$ is a $p-$group and $|G|=|E_G|+1=3$. Thus $G\cong \mathbb{Z}_3$.

Now assume that $rad(|x|)$ is not a prime for all $x\in E_G$. Form Lemma (\ref{rad}), we have that $\phi(|x|)\leq |E_G|=2$ for all $x\in E_G$, which yields $|x|=6$ for all $x\in E_G$. Since an element and its inverse have the same order, there exists $x\in G$ such that $|x|=6$ and $E_G=\{x,x^5\}\subseteq \left\langle x\right\rangle$. Thus Theorem (\ref{bound}) implies that $|G|\leq |x|\phi(|x|)=12$. Furthermore,  $rad(|G|)=rad(|x|)=6$. So  $|G|=6$ or $|G|=12$. If $|G|=6$, then $G=\left\langle x\right\rangle\cong \mathbb{Z}_6$. Also, from the groups of order $12$ only $D_{12}$ and $Dic_{12}$ has exactly two end vertices. Thus $G\cong \mathbb{Z}_6, D_{12}, Dic_{12}$. 
\end{proof}

\begin{remark} Each one of the groups $D_{12}$ and $Dic_{12}$ meet the bound given in Remark (\ref{sharp}). Each group contains element of order $6$ that generate both end vertices and $2\times 6=12$ which is the order of each group.
\end{remark}

\begin{theorem}\label{e4}
For a finite group $G$, $|E_G|=4$ if and only if $G\cong \mathbb{Z}_5$, $\mathbb{Z}_{10}$, $D_{20}$, $Dic_{20}$ ,$GA(1,5)\times \mathbb{Z}_2$, or $\langle a,b \mid a^5 = b^8 = e, bab^{-1} = a^2 \rangle.$
\end{theorem}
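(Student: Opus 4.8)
The plan is to follow the same two-case template established in Theorem (\ref{e2}), splitting on whether some end vertex has prime-power radical. First I would dispose of the prime-power case: if there exists $x\in E_G$ with $rad(|x|)=p$ a prime, then Theorem (\ref{epg}) forces $G$ to be a $p$-group with $|G|=|E_G|+1=5$, so $G\cong\mathbb{Z}_5$. This handles the first group on the list immediately.

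For the remaining (and substantive) case, assume $rad(|x|)$ is not prime for every $x\in E_G$. By Lemma (\ref{rad})(ii), every $x\in E_G$ satisfies $\phi(|x|)\leq 4$, and since no end vertex has order $2$ (as $m\geq 2$ was noted in the proof of Theorem (\ref{lb})) and $rad(|x|)$ is composite, the admissible orders are sharply limited: solving $\phi(|x|)\leq 4$ with $rad(|x|)$ composite leaves only $|x|\in\{6,10,12\}$ (noting $\phi(6)=2$, $\phi(10)=4$, $\phi(12)=4$, while $rad$ being composite rules out prime powers like $8,9$). I would then use $rad(|G|)=rad(|x|)$ together with the pairing $x\leftrightarrow x^{-1}$ to pin down which combination of these orders can coexist inside a four-element set $E_G$. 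The delicate bookkeeping is that $E_G$ has exactly four elements, all sharing the same radical $rad(|G|)$, so they cannot mix orders with different radicals; this should force $rad(|G|)=10$ (giving elements of order $10$) or $rad(|G|)=6$ (forcing a blend of orders $6$ and $12$, since $\phi(6)=2$ alone gives only two end vertices).

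Next I would invoke Theorem (\ref{bound}) to bound $|G|$ in each surviving subcase. When all four end vertices lie in a single cyclic subgroup $\langle x\rangle$ — which happens when $|x|=10$, since $\phi(10)=4=|E_G|$ and Lemma (\ref{rad})(iii) then gives a unique cyclic subgroup of order $10$ — Theorem (\ref{bound}) yields $|G|\leq|x|\phi(|x|)=40$, and combined with $rad(|G|)=10$ this restricts $|G|$ to a short list of multiples of $10$ whose radical is $10$, namely $10,20,40$. When the end vertices split between orders $6$ and $12$, I would argue the element of order $12$ still generates all four end vertices (its subgroup $\langle x\rangle$ of order $12$ contains the $\phi(12)=4$ generators, but one must check the order-$6$ elements are its powers), so again Theorem (\ref{bound}) applies with $|G|\leq 12\cdot 4=48$ and $rad(|G|)=6$ confines $|G|$ to $\{12,24,36,48\}$. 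For each permitted order I would then enumerate the groups of that order and compute $|E_G|$ directly, retaining exactly those with four end vertices: this is where $\mathbb{Z}_{10}$, $D_{20}$, $Dic_{20}$, $GA(1,5)\times\mathbb{Z}_2$, and the metacyclic group $\langle a,b\mid a^5=b^8=e,\ bab^{-1}=a^2\rangle$ of order $40$ emerge.

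The main obstacle I anticipate is the final case-by-case verification over all groups of the candidate orders (especially order $40$, which has fourteen isomorphism types, and orders $24,36,48$), confirming that the five listed groups are the only ones realizing $|E_G|=4$. The structural theorems cut the possible orders down to a finite list efficiently, but there is no shortcut around checking each group's element-order distribution against the condition $rad(|g|)=rad(|G|)$; the appearance of the specific presentation $\langle a,b\mid a^5=b^8=e,\ bab^{-1}=a^2\rangle$ signals that this group must be isolated by hand as the unique order-$40$ survivor, presumably because its Sylow structure forces its four order-$10$ (or order-$40$) elements to form a single cyclic orbit meeting the radical condition while all competitors fail. I would therefore present the order-reduction cleanly via Theorems (\ref{bound}) and (\ref{epg}) and Lemma (\ref{rad}), then relegate the finite enumeration to a concise appeal to the classification of groups of small order.
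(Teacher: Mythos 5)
There is a genuine gap in your case analysis for $rad(|G|)=6$. You assert that when the radical is $6$ the end vertices must be ``a blend of orders $6$ and $12$, since $\phi(6)=2$ alone gives only two end vertices.'' This inference is faulty: $\phi(6)=2$ bounds the number of generators of a \emph{single} cyclic subgroup of order $6$, not the number of order-$6$ elements in $G$. Nothing a priori prevents $E_G=\{x,x^5,y,y^5\}$ with $\langle x\rangle\neq\langle y\rangle$ two distinct cyclic subgroups of order $6$, and this is exactly the case the paper spends the most effort on. The paper handles it by observing that $yxy^{-1}\in E_G$ but $yxy^{-1}\notin\{y,y^{-1}\}$, so either $yxy^{-1}=x$ or $yxy^{-1}=x^{-1}$; in the commuting case Theorem (\ref{cen}) gives $C_G(x)=\langle x\rangle\cup\langle y\rangle$, forcing $|C_G(x)|=6$ and the contradiction $y\in\langle x\rangle$, while in the inverting case $C_G(x)=\langle x\rangle$, so $|G|=|Cl_G(x)||x|\leq 24$, and a check of groups of order $6,12,18,24$ shows none has exactly four elements of order $6$. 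Your proposal never performs this argument, so if such a group had existed your classification would have missed it; note also that your planned enumeration of orders $\{12,24,36,48\}$ does not even cover the orders $6$ and $18$ that this case demands.

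A secondary flaw: the ``blend of orders $6$ and $12$'' case you propose to carry through Theorem (\ref{bound}) is actually vacuous, and the paper kills it in one line. If some $x\in E_G$ has $|x|=12$, then $x,x^5,x^7,x^{11}$ (order $12$) and $x^2,x^{10}$ (order $6$) all have radical $6=rad(|G|)$, hence all lie in $E_G$, giving $|E_G|\geq 6>4$, a contradiction. So no end vertex of order $12$ can exist at all, and the enumeration over orders $12,24,36,48$ you anticipate is unnecessary. Your treatment of $\mathbb{Z}_5$ and of the order-$10$ case (where $E_G=\{x,x^3,x^7,x^9\}\subseteq\langle x\rangle$, Theorem (\ref{bound}) gives $|G|\leq 40$, and $rad(|G|)=10$ leaves $|G|\in\{10,20,40\}$) does match the paper; the missing all-order-$6$ case is what must be added to make the proof complete.
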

\begin{proof}
Suppose $|E_G|=4$. If there is $x\in E_G$ such that $rad(|x|)=p$, where $p$ is a prime, then $G$ is a $p-$group and $|G|=|E_G|+1=5$. So $G\cong \mathbb{Z}_5$.

Now assume that $rad(|x|)$ is not a prime for all $x\in E_G$. By Lemma (\ref{rad}), we have $\phi(|x|)\leq 4$. So the possible orders of the elements in $E_G$ are $6$, $10$, and $12$. If  $|x|=12$ for some $x\in E_G$, then $|x|=|x^5|=|x^7|=|x^{11}|=12$ and $|x^2|=|x^{10}|=6$. This implies that $|E_G|\geq6$, a contradiction. 

Now Assume that $|x|=10$ for some $x\in E_G$. Then $rad(|G|)=10$ and $E_G=\{x, x^3, x^7, x^9\}\subseteq \left\langle x \right\rangle$. Now Theorem \ref{bound} implies that $|G|\leq |x|\phi(|x|)=40$. So $|G|=10, 20, \text{ or } 40$. After examining all groups of orders $10$, $20$, and $40$ we conclude that $G\cong \mathbb{Z}_{10}, D_{20}, Dic_{20}, GA(1,5)\times \mathbb{Z}_2, \text{ or } \langle a,b \mid a^5 = b^8 = e, bab^{-1} = a^2 \rangle$.

The remaining case is that $|x|=6$ for all $x\in E_G$. In this case $E_G=\{x , x^5, y, y^5\}$ where $x,y\in G$ with $x\neq y$, $x\neq y^{-1}$, and $|x|=|y|=6$. Now $yxy^{-1}\in E_G$ but also $yxy^{-1}\neq y \text{ and } yxy^{-1}\neq y^{-1}$. So either $yxy^{-1}=x$ or $yxy^{-1}=x^5$.

Case 1: $yxy^{-1}=x$. Then Theorem (\ref{cen}) implies that $C_G(x)=\left\langle x\right\rangle\cup\left\langle y\right\rangle$. So $|C_G(x)|\leq 11$, which yields $|C_G(x)|=6$. So $C_G(x) =\left\langle x\right\rangle$, which implies $y\in \left\langle x\right\rangle$, a contradiction because $y\neq x$ and $y \neq x^{-1}$.

Case 2: $yxy^{-1}=x^{-1}$. So $y,y^{-1} \notin C_G(x)$. Then again Theorem (\ref{cen}) implies $C_G(x)=\left\langle x\right\rangle$.
Thus we have \[|G|=[G:C_G(x)]|C_G(x)|=|Cl_G(x)||x|\leq 24\] So $|G|=6, 12, 18, \text{ or } 24$. Examining all groups of these orders shows that none of them has exactly four elements of order $6$.
\end{proof}

\begin{theorem}\label{e6}
For a finite group $G$, $|E_G|=6$ if and only if $G$ is isomorphic to one of the following groups:
\begin{enumerate}[(i)]
\item (Groups of order $7$) $\mathbb{Z}_{7}$. 
\item (Groups of order $12$) $\mathbb{Z}_{12}$, $\mathbb{Z}_2\times \mathbb{Z}_6$. 
\item (Groups of order $14$) $\mathbb{Z}_{14}$. 
\item (Groups of order $18$) $S_3\times \mathbb{Z}_3$.
\item (Groups of order $24$) $D_{24}$, $Dic_{24}$, $S_3\times \mathbb{Z}_4$, $Dic_{12}\times \mathbb{Z}_2$, $D_{12}\times \mathbb{Z}_2$, 
$\langle a,b \mid a^3 = b^8 = e, bab^{-1} = a^{-1} \rangle$, $\langle a,b,c \mid a^2=b^2=c^3=(ac)^2=(ba)^4=e, bc=cb\rangle $.
\item (Groups of order $28$) $D_{28}$, $Dic_{28}$.
\item (Groups of order $36$) $C_3\times A_4$, $\left\langle a,b \mid a^9=b^2=e, (a^{-1}b)^2=ba^{-2} \right\rangle$.
\item (Groups of order $72$)\\ $\langle a,b,c\mid a^2=b^2=c^9=(ac)^2=e, bac^{-1}=cab,  abcb=bac, bc^3=c^3b\rangle$, \\
$\langle a,b,c,d \mid a^2=b^2=c^3=d^3=(bc)^3=(abc)^2=e, ad=d^2a, ac=c^2a, cd=dc, bd=db\rangle.$
\end{enumerate}
\end{theorem}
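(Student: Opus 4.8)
The plan is to follow the same two-stage template already established in the proofs of Theorems (\ref{e2}) and (\ref{e4}): first extract the arithmetic constraints that $|E_G|=6$ places on the orders of end vertices and on $|G|$, then enumerate the finitely many candidate orders and check each group. By Lemma (\ref{rad})(ii), every $x\in E_G$ satisfies $\phi(|x|)\leq 6$, so the admissible orders are drawn from $\{7,9,14,18\}$ (the prime-power case giving $\mathbb{Z}_7$ via Theorem (\ref{epg})) together with the composite radical orders $6,12,14,18$; I would first dispose of the $p$-group case exactly as before, concluding $G\cong\mathbb{Z}_7$ when some end vertex has prime-power order. For the remaining cases $rad(|x|)$ is composite for all $x\in E_G$, and $rad(|G|)=rad(|x|)$ is forced to be a product of distinct primes with $\phi$-value at most $6$.

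Next I would organize the composite cases by the possible values of $rad(|G|)$, which must be $6$, $10$, $14$, or $15$ (the squarefree numbers $d$ with an element-order $x$ having $rad(x)=d$ and $\phi(x)\le 6$); in fact the constraint $\phi(|x|)\le 6$ eliminates any order divisible by three distinct primes except where $\phi$ stays small, so the viable radicals are the products of two primes listed. Within each radical class I would use Theorem (\ref{bound}) whenever the six end vertices all lie in a single cyclic subgroup $\langle x\rangle$ (for instance when a single element order such as $14$ or $18$ accounts for all of $E_G$, since $\phi(14)=\phi(18)=6$), obtaining the sharp inequality $|G|\le |x|\phi(|x|)$ and hence a short list of candidate orders like $14,28$ or $18,36,72$. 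When the end vertices split across two or more cyclic subgroups (the analogue of the two-generator subcase in Theorem (\ref{e4}) where $|x|=6$ for all $x$), I would instead bound $|C_G(x)|$ directly through Theorem (\ref{cen}) and the conjugacy-class inclusion $Cl_G(x)\subseteq E_G$, giving $|Cl_G(x)|\le 6$ and a corresponding ceiling on $|G|$.

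The combinatorial heart is the case $rad(|G|)=6$ with all six end vertices of order $6$: here $E_G=\{x,x^5,y,y^5,z,z^5\}$ comes from three distinct cyclic subgroups, and the conjugation action of $G$ permutes these six elements while preserving orders and the inversion pairing. I would analyze this permutation action to bound $|C_G(x)|$ and $|Cl_G(x)|$, producing candidate orders $12,24,36,72$ and explaining why orders $18,36,72$ also survive from the mixed radical-$6$ configurations; this is what yields the long lists in parts (ii), (iv), (v), (vii), and (viii). Finally, for each surviving order $7,12,14,18,24,28,36,72$ I would consult the classification of groups of that order and count elements satisfying $rad(|g|)=rad(|G|)$, retaining exactly those groups with six such elements.

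The main obstacle will be the verification half rather than the bounding half: orders $24$, $36$, and $72$ have many isomorphism types (fifteen groups of order $24$ alone, and the order-$72$ analysis requires distinguishing subtle extensions given by presentations), so the labor-intensive step is confirming that precisely the listed groups, and no others, have exactly six elements whose radical matches $rad(|G|)$. I expect the bounding arguments to terminate the candidate search cleanly, but ruling out the remaining groups of these composite orders—particularly separating the two order-$72$ examples from their near-isomorphs—will demand careful element-order bookkeeping, most efficiently carried out with the aid of a computational group-theory check.
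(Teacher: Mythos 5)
Your plan reproduces the paper's proof template step for step: eliminate the prime-power case via Theorem (\ref{epg}) (giving $\mathbb{Z}_7$), use Lemma (\ref{rad})(ii) to force $\phi(|x|)\leq 6$ on end vertices, split into cases by element order/radical, apply Theorem (\ref{bound}) when $E_G\subseteq\left\langle x\right\rangle$ and Theorem (\ref{cen}) together with $|Cl_G(x)|\leq|E_G|$ in the split case, and finish by checking all groups of the surviving candidate orders. In that sense there is no methodological difference from the paper.

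However, two of your arithmetic assertions are false, and they occur exactly where the paper's proof is careful. First, an element of order $18$ can never ``account for all of $E_G$'': if $x\in E_G$ has $|x|=18$, then $rad(|x^3|)=rad(6)=6=rad(18)=rad(|G|)$, so $x^3$ and $x^{15}$ (order $6$) are end vertices by Lemma (\ref{rad})(i) in addition to the six generators of $\left\langle x\right\rangle$, forcing $|E_G|\geq 8$. This is why the paper excludes order $18$ at the outset; your symmetric treatment of $14$ and $18$ via ``$\phi(14)=\phi(18)=6$'' misses the point that when $rad(|x|)<|x|$ the subgroup $\left\langle x\right\rangle$ contributes end vertices beyond its generators (this is precisely why $12$ works, contributing $4+2=6$, why $14$ works, contributing exactly its $6$ generators, and why $18$ fails). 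Second, radical $15$ is impossible: any order with radical $15$ is a multiple of $15$ and $\phi(15)=8>6$, so the viable composite radicals are only $6$, $10$, and $14$ (and radical $10$ then dies because end vertices of order $10$ come in quadruples of generators, which cannot sum to $6$ --- the paper's Case 3). Because your final stage is an exhaustive check over every group of every candidate order, these slips would self-correct --- the phantom cases are vacuous and only inflate the search (e.g.\ admitting group order $108$) --- so your classification would still come out right, but as written the bounding stage contains false statements that a referee would flag, and fixing them is exactly the content of the paper's opening elimination of order $18$.
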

\begin{proof}
Suppose $|E_G|=6$. Assume $E_G$ contains an element whose order is a power of a prime $p$, then $G$ is a $p-$ group and $|G|=|E_G|+1=7$. So $G\cong \mathbb{Z}_7$.

Now assume the order of any element of $E_G$ is not a power of a prime. By Lemma (\ref{rad}), we have $\phi(|x|)\leq 6$. So the possible orders of the elements in $E_G$ are $6$, $10$, $12$, $14$, and $18$. If  $|x|=18$ for some $x\in E_G$, then $|x|=|x^5|=|x^7|=|x^{11}|=|X^{13}|=|X^{17}|=18$ and $|x^3|=|x^{15}|=6$. This implies that $|E_G|\geq8$, a contradiction.

Case 1: $|x|=14$ for some $x\in E_G$. Then $rad(|G|)=14$ and \[E_G=\{x, x^3, x^5, x^9, x^{11}, x^{13}\} \subseteq \left\langle x \right\rangle.\] Now Theorem (\ref{bound}) implies that $|G|\leq |x|\phi(|x|)=84$. So $|G|=14, 28, \text{ or } 56$.  

Case 2: $|x|=12$ for some $x\in E_G$. Then $rad(|G|)=6$ and \[E_G=\{x, x^2, x^5, x^7, x^{10}, x^{11}\}\subseteq \left\langle x \right\rangle.\]Again Theorem \ref{bound} implies that $|G|\leq |x|\phi(|x|)=48$. So $|G|=12, 24, 36, \text{ or } 48$. 

Case 3: $|x|=10$ for some $E_G$. Then the set $U=\{x, x^3,  x^7, x^9\}\subseteq E_G\}$. So there is $y\in (E_G\setminus U)$ such that $\phi(|y|)=2$ and $rad(|y|)=10$, which is impossible.   

Case 4: $|x|=6$ for all $x\in E_G$. In this case $E_G=\{x , x^5, y, y^5, z, z^5\}$, where $x$, $y$, and $z$ are elements of order $6$ order in $G$. Also we have $rad(|G|=6)$ and $|Cl_G(x)|\leq 6$. Furthermore, Theorem  (\ref{cen}) implies that $|C_G(x)|\leq 16$. Hence $|G|=6,12,18,24,36,48,54,72, 96.$ 

From all cases combined we conclude that the possible orders for $G$ are $6$, $14$, $12$, $18$, $24$, $28$, $36$, $48$, $54$, $56$, $72$, $96$. After analyzing the all coprime graphs of groups having these orders we obtain the result in theorem. \end{proof}

For any positive integer $n$, the groups $G$, for which $|E_G|=n$, can be determined using the same technique presented in this section. We classified all groups $G$ for which $|E_G|=8$ or $|E_G|=10$. We mention these results for completeness however the proofs are omitted. Regarding the case $|E_G|=8$, there are  $48$ groups $G$ for which $|E_G|=8$. We list below the GAP ID's \cite{GAP17} for these groups. These groups can be obtained from the GAP's SmallGroup library using the command "$SmallGroup(n,k)$" , where $(n,k)$ is the ID of the group. 

\noindent $(9,1),$ $(9,2),$ $(15,1),$ $(18,2),$ $(18,5),$ $(24,3),$ $( 24,13),$  $( 30,4),$  $(36,1),$  $(36,4),$  $(36,7),$  $(36,13),$  $(48,28),$  $(48,29),$  $(48,30),$ $(48,48),$  $(60,1),$ $(60,2),$  $(60,3),$  $(60,10),$ $(60,11),$  $(60,12),$  $(72,19),$  $(72,45),$  $(120,6),$  $(120,7),$  $(120,8),$  $(120,9),$  $(120,10),$  $(120,11),$  $(120,12),$  $(120,13),$  $(120,14),$  $(120,40),$  $(120,41),$  $(120,42),$  $(144,114),$  $(144,120),$  $(144,185),$  $(144,187),$  $(240,95),$  $(240,96),$  $(240,97),$  $(240,98),$  $(240,99),$  $(240,100),$  $(240,101),$ $(240,195)$.    

For the case $|E_G|=10$, we find that  the groups $\mathbb{Z}_{11}, \mathbb{Z}_{22}, D_{44}$, and $Dic_{44}$ are the only groups whose coprime graph has exactly $10$ end vertices.

\end{document}